\documentclass[12pt,a4paper,oneside]{amsart}\sloppy
\usepackage{amsfonts, amsmath, amssymb, amsthm}
\usepackage[colorlinks=true,citecolor=blue]{hyperref}
\usepackage[margin=1.4in]{geometry}





\usepackage{txfonts}
\usepackage[T1]{fontenc}

\usepackage{bbm}

\usepackage{mathtools}

\usepackage{graphicx}
\usepackage{parskip}
\usepackage{enumerate}
\usepackage{verbatim}
\usepackage{tikz}
\usepackage{algorithmic}

\usetikzlibrary{decorations,decorations.pathreplacing}

\begingroup
    \makeatletter
    \@for\theoremstyle:=definition,remark,plain\do{%
        \expandafter\g@addto@macro\csname th@\theoremstyle\endcsname{%
            \addtolength\thm@preskip\parskip
            }%
        }
\endgroup

\newtheorem{theorem}{Theorem}[section]
\newtheorem*{theorem*}{Theorem}
\newtheorem{lemma}[theorem]{Lemma}

\theoremstyle{definition}

\newtheorem*{remark*}{Remark}

\newtheorem*{claim*}{Claim}
\newtheorem{example}[theorem]{Example}
\newtheorem{problem}[theorem]{Problem}
\newtheorem{obs}[theorem]{Observation}

\newcommand{\df}[1]{{{\color{blue!50!black}\em #1}}}


\begin{document} 

\title{A Note on the Colorful fractional Helly theorem}  

\author{Minki Kim}
\date{\today}

\address{Minki Kim,\hfill \hfill \linebreak 
Department of Mathematical Sciences, \hfill \hfill \linebreak
KAIST, 
Daejeon, South Korea.  \hfill \hfill }
\email{kmk90@kaist.ac.kr}
\maketitle 

\begin{abstract}
	Helly's theorem is a classical result concerning the intersection patterns of convex sets in $\mathbb{R}^d$.
	Two important generalizations are the colorful version and the fractional version.
	Recently, B\'{a}r\'{a}ny et al. combined the two, obtaining a colorful fractional Helly theorem.
	In this paper, we give an improved version of their result.
\end{abstract}

\section{Introduction}
	Helly's theorem is one of the most well-known and fundamental results in combinatorial geometry, which has various generalizations and applications.
	It was first proved by Helly~\cite{Hel23} in $1913$, but his proof was not published until $1923$, after alternative proofs by Radon~\cite{Rad21} and K\"{o}nig~\cite{Kng22}.
	Recall that a family is \df{intersecting} if the intersection of all members is non-empty.
	The following is the original version of Helly's theorem.
\begin{theorem}[Helly's theorem, Helly {\cite{Hel23}}] \label{thm:hel}
	Let $\mathcal{F}$ be a finite family of convex sets in $\mathbb{R}^d$ with $|\mathcal{F}|\geq d+1$.
	Suppose every $(d+1)$-tuple of $\mathcal{F}$ is intersecting.
	Then the whole family $\mathcal{F}$ is intersecting.
\end{theorem}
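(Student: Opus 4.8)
The plan is to deduce Helly's theorem from Radon's theorem (every set of $d+2$ points in $\mathbb{R}^d$ can be split into two parts whose convex hulls intersect) together with an induction on $n\defeq|\mathcal{F}|$. Radon's theorem itself is immediate: the $d+1$ homogeneous linear equations $\sum_{i}\lambda_i p_i=0$ and $\sum_i\lambda_i=0$ in the $d+2$ unknowns $\lambda_1,\dots,\lambda_{d+2}$ have a nontrivial solution, and separating the indices with $\lambda_i\ge 0$ from those with $\lambda_i<0$ yields the desired partition.

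First I would treat the crucial case $n=d+2$, writing $\mathcal{F}=\{C_1,\dots,C_{d+2}\}$. For each $i$ the hypothesis, applied to the $(d+1)$-tuple $\mathcal{F}\setminus\{C_i\}$, provides a point $p_i\in\bigcap_{j\neq i}C_j$. Applying Radon's theorem to $p_1,\dots,p_{d+2}$ gives a partition $\{1,\dots,d+2\}=I\sqcup J$ and a point $x\in\operatorname{conv}\{p_i:i\in I\}\cap\operatorname{conv}\{p_j:j\in J\}$. Now fix any $k\in\{1,\dots,d+2\}$; since $I\cap J=\emptyset$, we have $k\notin I$ or $k\notin J$, say $k\notin I$. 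Then $p_i\in C_k$ for every $i\in I$ (because $i\neq k$), so convexity of $C_k$ gives $\operatorname{conv}\{p_i:i\in I\}\subseteq C_k$, whence $x\in C_k$. As $k$ was arbitrary, $x\in\bigcap\mathcal{F}$.

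For the inductive step, suppose $n\geq d+2$ and that the theorem holds for families of $n-1$ convex sets. Form $\mathcal{F}'$ from $\mathcal{F}$ by replacing two members $C_1,C_2$ with the single convex set $C_1\cap C_2$, so that $|\mathcal{F}'|=n-1$ and $\bigcap\mathcal{F}'=\bigcap\mathcal{F}$. I claim every $(d+1)$-tuple of $\mathcal{F}'$ is intersecting. Such a tuple either avoids $C_1\cap C_2$ — then it is a $(d+1)$-tuple of $\mathcal{F}$, hence intersecting — or has the form $\{C_1\cap C_2\}\cup\{C_i:i\in S\}$ with $S\subseteq\{3,\dots,n\}$ and $|S|=d$. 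In the latter case, consider the $d+2$ convex sets $C_1,C_2$ and $C_i$ for $i\in S$; each of their $(d+1)$-subtuples is obtained by deleting one of them and is therefore a $(d+1)$-tuple of the original family $\mathcal{F}$, so it is intersecting. The case $n=d+2$ then shows $C_1\cap C_2\cap\bigcap_{i\in S}C_i\neq\emptyset$, which is exactly intersectingness of the given tuple of $\mathcal{F}'$. By the inductive hypothesis $\mathcal{F}'$ is intersecting, hence so is $\mathcal{F}$; the base case $n=d+1$ is just the hypothesis.

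The substantive part is the case $n=d+2$ and, inside it, the appeal to Radon's theorem; the rest is bookkeeping. The point I would take care over is the reduction step — verifying that passing to $C_1\cap C_2$ produces, after unwinding through the $d+2$ case, only $(d+1)$-tuples of the original family $\mathcal{F}$ — since that is where an off-by-one in the counting would hide.
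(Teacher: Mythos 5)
Your argument is correct: the Radon step, the case $n=d+2$, and the induction via replacing $C_1,C_2$ by the convex set $C_1\cap C_2$ are all carried out properly, and the delicate point you flag (that every $(d+1)$-tuple of $\mathcal{F}'$ containing $C_1\cap C_2$ reduces, through the $(d+2)$-point case, only to $(d+1)$-tuples of the original $\mathcal{F}$) is handled without any off-by-one error. Note, however, that the paper offers no proof of this statement at all — Theorem~\ref{thm:hel} is quoted as a classical result of Helly, with the Radon and K\"onig proofs only cited — so there is nothing internal to compare against; what you have written is precisely the standard Radon-partition proof from the literature, and it stands on its own.
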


	The following variant of Helly's theorem was found by Lov\'{a}sz, whose proof appeared first in B\'{a}r\'{a}ny's paper~\cite{Bar82}.
	Note that the original Helly's theorem is obtained by setting $\mathcal{F}_1=\mathcal{F}_2=\cdots=\mathcal{F}_{d+1}$.
\begin{theorem}[Colorful Helly theorem, Lov\'{a}sz {\cite{Bar82}}] \label{thm:colhel}
	Let $\mathcal{F}_1,\mathcal{F}_2,\dots,\mathcal{F}_{d+1}$ be finite, non-empty families (color classes) of convex sets in $\mathbb{R}^d$ such that every colorful $(d+1)$-tuple is intersecting.
	Then for some $1\leq i\leq d+1$, the whole family $\mathcal{F}_i$ is intersecting.
\end{theorem}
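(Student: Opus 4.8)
The plan is to argue by contradiction: assuming that no $\mathcal{F}_i$ is intersecting, I will exhibit a colorful $(d+1)$-tuple with empty intersection, contradicting the hypothesis. First I would reduce to the case that all the convex sets are compact. Since the families are finite, there are only finitely many sets in total and finitely many colorful $(d+1)$-tuples; so, choosing a point in every set and in every (nonempty) colorful intersection and a closed ball $B$ containing all of them, and replacing each $C$ by $C\cap B$, I keep every set nonempty and compact, keep every $\mathcal{F}_i$ non-intersecting, and keep every colorful $(d+1)$-tuple intersecting. Hence I may assume from now on that all sets are compact and, for contradiction, that $\bigcap\mathcal{F}_i=\emptyset$ for each $i$.

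Next I would single out a ``deepest'' colorful tuple. Fix linearly independent linear functionals $\varphi_1,\dots,\varphi_d$ on $\mathbb{R}^d$ in sufficiently general position that for every colorful tuple $\tau=(C_1,\dots,C_{d+1})$ the nonempty compact convex set $\bigcap_i C_i$ has a \emph{unique} point $p(\tau)$ minimizing $\varphi_1$; this is possible since, for a fixed compact convex set, the directions along which the minimum is attained on more than one point form a set of measure zero. Among the finitely many colorful tuples, let $\tau^\ast=(C_1^\ast,\dots,C_{d+1}^\ast)$ be one for which $\bigl(\varphi_1(p(\tau)),\dots,\varphi_d(p(\tau))\bigr)$ is lexicographically maximal, and put $p^\ast:=p(\tau^\ast)$. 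Thus $\varphi_1$ is minimized over $\bigcap_i C_i^\ast$ only at $p^\ast$, and $\varphi_1(p(\tau))\le\varphi_1(p^\ast)$ for every colorful $\tau$.

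Now I would invoke the assumption that the color classes fail to be intersecting. For each $j$, since $\bigcap\mathcal{F}_j=\emptyset$ there is $D_j\in\mathcal{F}_j$ with $p^\ast\notin D_j$; replacing the $j$-th set of $\tau^\ast$ by $D_j$ gives a colorful tuple $\tau_j$, so $q_j:=p(\tau_j)$ satisfies $q_j\in\bigcap_{i\ne j}C_i^\ast$, $q_j\in D_j$ (hence $q_j\ne p^\ast$), and $\varphi_1(q_j)\le\varphi_1(p^\ast)$. Applying Radon's theorem to the $d+2$ points $p^\ast,q_1,\dots,q_{d+1}$ yields disjoint sets $S,S'\subseteq\{1,\dots,d+1\}$ with $S\ne\emptyset$ and a point $y$ that is a convex combination of $\{q_j:j\in S\}$ with all coefficients positive and that also lies in $\mathrm{conv}\bigl(\{p^\ast\}\cup\{q_j:j\in S'\}\bigr)$. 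Since $p^\ast\in\bigcap_i C_i^\ast$ and $q_j\in C_i^\ast$ whenever $i\ne j$, disjointness of $S$ and $S'$ forces every $C_i^\ast$ to contain one of these two generating sets entirely, so $y\in\bigcap_i C_i^\ast$ and hence $\varphi_1(y)\ge\varphi_1(p^\ast)$; but $y$ is a positive convex combination of points on which $\varphi_1\le\varphi_1(p^\ast)$, so $\varphi_1(y)\le\varphi_1(p^\ast)$. Therefore $\varphi_1(y)=\varphi_1(p^\ast)$ and, by uniqueness of the minimizer, $y=p^\ast$. Writing $p^\ast=\sum_{j\in S}\gamma_j q_j$ with $\gamma_j>0$ and applying $\varphi_1$ forces $\varphi_1(q_j)=\varphi_1(p^\ast)$ for every $j\in S$; then each $\tau_j$ with $j\in S$ ties $\tau^\ast$ in the first coordinate, so lexicographic maximality gives $\varphi_2(q_j)\le\varphi_2(p^\ast)$, and applying $\varphi_2$ to the same identity upgrades this to equality. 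Iterating through $\varphi_3,\dots,\varphi_d$ yields $\varphi_k(q_j)=\varphi_k(p^\ast)$ for all $k$ and all $j\in S$; since $\varphi_1,\dots,\varphi_d$ are linearly independent this means $q_j=p^\ast$, contradicting $q_j\ne p^\ast$.

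The main obstacle, I expect, is not the geometric idea but controlling degeneracies: a general convex body can have flat faces, and several colorful tuples can tie in the extremal ordering, which would break a naive ``extremal point'' argument outright. The remedy is precisely the combination of the measure-zero genericity of $\varphi_1$ with the lexicographic tie-break across $\varphi_1,\dots,\varphi_d$, together with the mild care needed for the reduction to compact sets; the geometric heart is just the single Radon step, which manufactures a point forced to lie in $\bigcap_i C_i^\ast$ yet unable to be anything other than its unique $\varphi_1$-minimizer.
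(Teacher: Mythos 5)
The paper itself does not prove Theorem~\ref{thm:colhel}: it is quoted as a known result of Lov\'asz with a pointer to B\'ar\'any's paper, so there is no in-paper argument to compare against. Your proof is correct. The compactness reduction is sound; choosing $\varphi_1$ outside the measure-zero set of directions with non-unique minimizer for each of the finitely many colorful intersections makes $p(\tau)$ well defined; the Radon step does force $y\in\bigcap_i C_i^\ast$, since each index $i$ avoids at least one of the two disjoint index sets $S,S'$ and hence $C_i^\ast$ contains all generators on that side; and the bootstrapping through $\varphi_2,\dots,\varphi_d$, using that all Radon coefficients on the $S$-side are positive, legitimately upgrades the lexicographic inequalities to equalities and yields $q_j=p^\ast$ for $j\in S$, contradicting $p^\ast\notin D_j$. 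This is in substance the classical Lov\'asz argument (extremal colorful tuple plus one Radon partition). The only stylistic difference is your two-level tie-breaking device: the usual presentation avoids the genericity discussion altogether by letting $p(\tau)$ be the lexicographically minimal point of $\bigcap\tau$, which is unique for any nonempty compact convex set, and choosing $\tau^\ast$ to maximize $p(\tau)$ lexicographically; the same Radon computation then closes, since the lexicographic order is respected by convex combinations coordinate by coordinate. Your variant buys nothing extra but costs only the (correct) measure-zero observation, so either route is fine.
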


	One way to generalize Helly's theorem is by weakening the assumption: not necessarily all but only a positive fraction of $(d+1)$-tuples are intersecting.
	The following theorem shows how the conclusion changes.
\begin{theorem}[Fractional Helly theorem, Katchalski and Liu {\cite{KL79}}] \label{thm:frachel}
	For every $\alpha\in(0,1]$, there exists $\beta=\beta(\alpha,d)\in(0,1]$ such that the following holds:
	Let $\mathcal{F}$ be a finite family of convex sets in $\mathbb{R}^d$ with $|\mathcal{F}|\geq d+1$.
	If at least $\alpha{|\mathcal{F}|\choose d+1}$ of the $(d+1)$-tuples in $\mathcal{F}$ are intersecting,
	then $\mathcal{F}$ contains an intersecting subfamily of size at least $\beta|\mathcal{F}|$.
\end{theorem}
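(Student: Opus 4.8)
The plan is to recast the problem as one about the face numbers of the nerve of $\mathcal{F}$, and then to apply an Upper Bound Theorem for that nerve. Write $n=|\mathcal{F}|$ and let $\mathcal{N}=\mathcal{N}(\mathcal{F})$ be the nerve of $\mathcal{F}$: the simplicial complex on vertex set $\mathcal{F}$ whose faces are exactly the intersecting subfamilies. A face of $\mathcal{N}$ on $k$ vertices is by definition an intersecting subfamily of size $k$, so it suffices to produce a face of $\mathcal{N}$ with at least $\beta n$ vertices, and the hypothesis is precisely that $\mathcal{N}$ has at least $\alpha\binom{n}{d+1}$ faces of dimension $d$. The case of bounded $n$ is trivial: since $\alpha>0$, some $(d+1)$-tuple is intersecting, which is already an intersecting subfamily of size $d+1$; so I may assume $n$ is larger than any fixed threshold and shrink $\beta$ at the end to cover the bounded range.

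For an arbitrary $(d+1)$-uniform hypergraph an edge density of $\alpha$ does not force a large clique (a random such hypergraph has cliques of size only $O(\log n)$), so convexity must enter beyond what Helly's theorem (Theorem~\ref{thm:hel}) alone provides. The needed input is that $\mathcal{N}$ is $d$-Leray: since the $C\in\mathcal{F}$ are convex, every nonempty intersection of them is contractible, so by the nerve lemma each induced subcomplex $\mathcal{N}|_{\mathcal{G}}$ is homotopy equivalent to the set $\bigcup_{C\in\mathcal{G}}C\subseteq\mathbb{R}^{d}$, whose reduced homology vanishes in all dimensions $\ge d$. (This $d$-Leray property is a homological strengthening of Helly's theorem, which it readily implies; alternatively one may quote Wegner's sharper result that $\mathcal{N}$ is $d$-collapsible.)

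Now I would invoke Kalai's Upper Bound Theorem for $d$-Leray complexes. It implies that a $d$-Leray complex on $n$ vertices whose largest face has $s$ vertices has at least $\binom{n-s+d}{d+1}$ of its $(d+1)$-element vertex subsets failing to be faces; the extremal complex is the join of the $(d{-}1)$-skeleton of a simplex with a full simplex. Applied to $\mathcal{N}$, together with the fact that at most $(1-\alpha)\binom{n}{d+1}$ of the $(d+1)$-tuples of $\mathcal{F}$ are non-intersecting, this yields $\binom{n-s+d}{d+1}\le(1-\alpha)\binom{n}{d+1}$, where $s$ is the size of the largest face of $\mathcal{N}$. Letting $n\to\infty$, this forces $(1-s/n)^{d+1}\le 1-\alpha+o(1)$, hence $s\ge\bigl(1-(1-\alpha)^{1/(d+1)}\bigr)n-o(n)$. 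Taking $\beta$ slightly below $1-(1-\alpha)^{1/(d+1)}$ — and shrinking it further to absorb the error term and the bounded-$n$ cases — therefore produces a face of $\mathcal{N}$ on at least $\beta n$ vertices, i.e.\ an intersecting subfamily of size $\ge\beta n$; this route in fact gives the optimal value of $\beta$.

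The step I expect to be the main obstacle is the Upper Bound Theorem for $d$-Leray complexes: its proof runs through Kalai's algebraic shifting and is considerably heavier than anything else in the argument. If one is content with a much weaker $\beta$ — of order $\alpha/(d+1)$, say — it can be replaced by an elementary double-counting argument along the lines of Katchalski and Liu's original proof, which repeatedly extracts a $d$-element subfamily contained in many intersecting $(d+1)$-tuples. Finally, since only the $d$-Leray property of the nerve is used, the same scheme proves a fractional Helly theorem for any family of sets whose nerve has Leray number at most $d$.
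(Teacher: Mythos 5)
The paper does not prove Theorem~\ref{thm:frachel} at all: it is quoted as a known result of Katchalski and Liu, with the remark that the sharp constant $\beta=1-(1-\alpha)^{1/(d+1)}$ of Kalai and Eckhoff ``follows from the upper bound theorem for families of convex sets.'' Your sketch is precisely that Kalai--Eckhoff route, and it is sound: pass to the nerve, use that it is $d$-Leray (or, avoiding nerve-lemma technicalities for arbitrary convex sets, $d$-collapsible by Wegner's theorem), and invoke the upper bound theorem in the form you state --- if the largest face has $s$ vertices, then at least $\binom{n-s+d}{d+1}$ of the $(d+1)$-subsets are non-faces, with the join of a simplex and a $(d-1)$-skeleton as the extremal complex --- which is the correct statement. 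The only cost is that the upper bound theorem is quoted rather than proved, but since the theorem under discussion is itself a cited classical result, that is a legitimate level of granularity; note also that the original Katchalski--Liu double counting (your fallback) already suffices for the statement as written, which only demands some positive $\beta(\alpha,d)$. One simplification: your asymptotic step and the separate bounded-$n$ case are unnecessary, since
\[\frac{\binom{n-s+d}{d+1}}{\binom{n}{d+1}}=\prod_{i=0}^{d}\frac{n-s+d-i}{n-i}\ \ge\ \Bigl(\frac{n-s}{n}\Bigr)^{d+1},\]
so $\binom{n-s+d}{d+1}\le(1-\alpha)\binom{n}{d+1}$ yields $s\ge\bigl(1-(1-\alpha)^{1/(d+1)}\bigr)n$ exactly, for every $n$, with no $o(n)$ loss and no shrinking of $\beta$. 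Your closing observation that only the Leray number of the nerve is used is also consistent with the known $d$-Leray generalizations of fractional Helly.
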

	The fractional variant of Helly's theorem first appeared as a conjecture on interval graphs, i.e. intersection graphs of families of intervals on $\mathbb{R}$.
	Abbott and Katchalski~\cite{AK79} proved that $\beta=1-\sqrt{1-\alpha}$ is optimal for every family whose intersection graph is a chordal graph.
	Note that, by the result of Gavril~\cite{Gav74}, interval graphs are chordal graphs.
	
	The fractional Helly theorem for arbitrary dimensions was proved by Katchalski and Liu~\cite{KL79}.
	Their proof gives a lower bound $\beta\geq\alpha/(d+1)$.
	However, it seems natural that $\beta$ tends to $1$ as $\alpha$ tends to $1$, since the original Helly's theorem implies that $\beta=1$ when $\alpha=1$.
	Later, the quantitatively sharp value $\beta(\alpha,d)=1-(1-\alpha)^{1/(d+1)}$ was found by Kalai~\cite{Kal84} and Eckhoff~\cite{Eck85}.
	The result follows from the upper bound theorem for families of convex sets.

	The $(p,q)$-theorem, another important generalization of Helly's theorem, deals with a weaker version of the assumption, the so-called $(p,q)$-condition: for every $p$ members in a given family, there are some $q$ members of the family that are intersecting.
	For instance, the $(d+1,d+1)$-condition in $\mathbb{R}^d$ is the hypothesis of Helly's theorem.
	The $(p,q)$-theorem was proved by Alon and Kleitman~\cite{AKl92}, settling a conjecture by Hadwiger and Debrunner~\cite{HD57}.

	The proof of the $(p,q)$-theorem is quite long and involved, using various techniques.
	However, one of the most crucial ingredients is the fractional Helly theorem.
	See the survey paper by Eckhoff~\cite{Eck93} and the textbook by Matousek~\cite{Mat02} for an overview and further knowledge of this field.
	
	Recently, B\'{a}r\'{a}ny et al.~\cite{BFMOP14} established the colorful and fractional versions of the $(p,q)$-theorem.
	A key ingredient in their proof was a colorful variant of the fractional Helly theorem.
\begin{theorem}[B\'{a}r\'{a}ny-Fodor-Montejano-Oliveros-P\'{o}r {\cite{BFMOP14}}] \label{thm:bfmop}
	Let $\mathcal{F}_1,\mathcal{F}_2,\dots,\mathcal{F}_{d+1}$ be finite, non-empty families (color classes) of convex sets in $\mathbb{R}^d$, and assume that $\alpha\in(0,1]$.
	If at least $\alpha|\mathcal{F}_1|\cdots|\mathcal{F}_{d+1}|$ of the colorful $(d+1)$-tuples are intersecting, then some $\mathcal{F}_i$ contains an intersecting subfamily of size $\frac{\alpha}{d+1}|\mathcal{F}_i|$.
\end{theorem}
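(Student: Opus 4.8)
The plan is to attach to each intersecting colorful $(d+1)$-tuple a single canonical witness point, to record which one of the $d+1$ color classes is ``responsible'' for that witness (in the sense that it is already pinned down by the other $d$ sets of the tuple), and then to run a pigeonhole over the $d+1$ classes followed by an averaging argument to produce a point common to many sets of one class.

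First I would make the harmless reduction to compact sets: replacing each convex set by its intersection with a large enough ball about the origin alters no intersection pattern among the finitely many subfamilies involved, and in particular preserves which colorful $(d+1)$-tuples are intersecting. Next I would fix a direction $u\in\mathbb R^d$ that is generic for our configuration, meaning that for every colorful tuple, and every colorful tuple with one set deleted, whose sets have nonempty intersection, the linear functional $x\mapsto\langle u,x\rangle$ attains its minimum over that intersection at a unique point; such $u$ exists because there are only finitely many such intersections and, for a compact convex body, all but a measure-zero set of directions expose a single point (the support function is convex, hence differentiable almost everywhere, and at a point of differentiability the extremum is attained uniquely). For such a configuration $\sigma$, write $q(\sigma)$ for this unique minimizer.

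The crux is the claim that for every intersecting colorful $(d+1)$-tuple $\tau=(F_1,\dots,F_{d+1})$ there is an index $k=k(\tau)$ with $q(F_1,\dots,F_{d+1})=q\bigl((F_j)_{j\ne k}\bigr)$, i.e. deleting $F_k$ does not move the minimizer; morally this is the linear-programming fact that an optimum is supported by at most $d$ of the constraints. To prove it, put $p=q(\tau)$ and let $L=\{x:\langle u,x\rangle<\langle u,p\rangle\}$, an open halfspace disjoint from $\bigcap_j F_j$ by the choice of $p$. Then $\{L,F_1,\dots,F_{d+1}\}$ is a family of $d+2$ convex sets with empty intersection, so by the contrapositive of Helly's theorem some $d+1$ of them already have empty intersection; this cannot be $\{F_1,\dots,F_{d+1}\}$ since $\tau$ is intersecting, so it is $\{L\}\cup\{F_j:j\ne k\}$ for some $k$, which forces $\bigcap_{j\ne k}F_j\subseteq\{x:\langle u,x\rangle\ge\langle u,p\rangle\}$. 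As $p$ lies in every $F_j$, this says $p$ minimizes $\langle u,\cdot\rangle$ over $\bigcap_{j\ne k}F_j$, and genericity of $u$ makes it the unique minimizer there. I expect isolating this ``small certificate'' for the witness to be the one genuinely geometric step; the rest is counting.

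To conclude, color each intersecting colorful $(d+1)$-tuple $\tau$ by $k(\tau)\in\{1,\dots,d+1\}$. Since there are at least $\alpha\prod_j|\mathcal F_j|$ such tuples, some color $k$ is received by at least $\tfrac{\alpha}{d+1}\prod_j|\mathcal F_j|$ of them. Each such tuple is the same datum as a pair $\bigl(\sigma,F_k\bigr)$ with $\sigma\in\prod_{j\ne k}\mathcal F_j$ and $F_k\in\mathcal F_k$, and by the claim its defining property is exactly that $q(\sigma)\in F_k$ (the witness depends only on $\sigma$, and it lies in $F_k$ because $\tau$ is intersecting). Averaging over the $\prod_{j\ne k}|\mathcal F_j|$ possible $\sigma$, some $\sigma^\ast$ has $q(\sigma^\ast)\in F_k$ for at least $\tfrac{\alpha}{d+1}|\mathcal F_k|$ sets $F_k\in\mathcal F_k$; all of these contain the point $q(\sigma^\ast)$, so they form an intersecting subfamily of $\mathcal F_k$ of the required size.
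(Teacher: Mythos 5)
Your proof is correct. Note first that this paper does not actually prove Theorem~\ref{thm:bfmop}: it is quoted from B\'ar\'any--Fodor--Montejano--Oliveros--P\'or, and the paper's own machinery (Observation~\ref{lem:nu} on matching numbers together with the contrapositive of the colorful Helly theorem, Lemma~\ref{obs:colhel}) is used only to prove the complementary bound $1-(d+1)(1-\alpha)^{1/(d+1)}$ of Theorem~\ref{claim:cfhel}, which is meaningful only for $\alpha$ close to $1$; the $\frac{\alpha}{d+1}$ part of that theorem is in effect imported from the cited result. Your argument is essentially a reconstruction of the original BFMOP-style proof: a canonical witness point (generic linear functional minimized over the tuple's intersection, after a harmless compactness reduction), the Helly/LP-type step showing one constraint is redundant --- your application of Helly to $\{L,F_1,\dots,F_{d+1}\}$ is sound, since the open halfspace $L$ is convex and Helly needs no closedness for finite families --- and then pigeonhole over the dropped color followed by averaging over the $d$-tuples $\sigma$. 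All steps check out; the genericity requirement is correctly imposed on both the full and the once-deleted colorful intersections, which is exactly what the uniqueness step needs. Two cosmetic points: the phrase that $q(\sigma)\in F_k$ is ``exactly'' the defining property overstates what you use --- only the injection from tuples colored $k$ into pairs $(\sigma,F_k)$ with $q(\sigma)\in F_k$ is needed, and that direction is what you prove; and one could replace the generic direction by a lexicographic minimum to avoid the measure-theoretic genericity argument altogether. Comparing the two methods: yours gives $\frac{\alpha}{d+1}|\mathcal F_i|$ for every $\alpha\in(0,1]$ but cannot give a bound tending to $1$, while the paper's matching argument gives a bound tending to $1$ as $\alpha\to 1$ but is vacuous for small $\alpha$; the two are genuinely complementary, which is why Theorem~\ref{claim:cfhel} states their maximum.
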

	
	Note that for $\alpha=1$ we recover the hypothesis of the colorful Helly theorem, and we should therefore expect the value $\beta =1$ (rather than $\beta = \frac{1}{d+1}$).
	B\'{a}r\'{a}ny et al. therefore proposed the problem of showing that the function $\beta$ in Theorem~\ref{thm:bfmop} tends to $1$ as $\alpha$ tends to $1$.

	In this paper, we solve the problem of  B\'{a}r\'{a}ny et al.
\begin{theorem} \label{thm:cfhel}
	For every $\alpha\in(0,1]$, there exists $\beta=\beta(\alpha,d)\in(0,1]$ tending to $1$ as $\alpha$ tends to $1$ such that the following holds:
	Let $\mathcal{F}_1,\mathcal{F}_2,\dots,\mathcal{F}_{d+1}$ be finite, non-empty families (color classes) of convex sets in $\mathbb{R}^d$.
	If at least $\alpha|\mathcal{F}_1|\cdots|\mathcal{F}_{d+1}|$ of the colorful $(d+1)$-tuples are intersecting, then for some $1\leq i\leq d+1$,
	$\mathcal{F}_i$ contains an intersecting subfamily of size $\beta|\mathcal{F}_i|$.
\end{theorem}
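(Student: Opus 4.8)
First I would dispose of the trivialities. If some $\mathcal{F}_i$ is already intersecting the conclusion holds for any $\beta\le 1$, so we may assume it is not; and by the usual limiting argument (replace each set by its intersection with a fixed large ball and pass to a limit) we may assume all sets are compact. Write $n_i:=|\mathcal{F}_i|$ and $\epsilon:=1-\alpha$. A useful preliminary observation is that for every subset $I\subseteq\{1,\dots,d+1\}$, restricting the hypothesis to the colors in $I$ shows that at least $\alpha\prod_{i\in I}n_i$ of the colorful $|I|$-tuples are intersecting.

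The main line of attack is to reduce to the colorful Helly theorem. It suffices to produce subfamilies $\mathcal{G}_i\subseteq\mathcal{F}_i$ with $|\mathcal{G}_i|\ge\beta n_i$ for all $i$ such that \emph{every} colorful $(d+1)$-tuple drawn from $\mathcal{G}_1,\dots,\mathcal{G}_{d+1}$ is intersecting: then Theorem~\ref{thm:colhel} gives an $i$ with $\mathcal{G}_i$, hence $\mathcal{F}_i$, intersecting. Equivalently, one must delete fewer than $(1-\beta)n_i$ sets from each $\mathcal{F}_i$ so as to destroy all of the at most $\epsilon\prod_i n_i$ non-intersecting colorful tuples, in such a way that $1-\beta\to 0$ as $\epsilon\to 0$. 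As a first step I would simplify the configuration by an averaging argument: choosing a uniformly random intersecting colorful tuple $(A_1^*,\dots,A_{d+1}^*)$ and applying Markov's inequality, one can fix such a tuple so that for every $i$ at least a $(1-O_d(\epsilon))$-fraction of the sets of $\mathcal{F}_i$ meet the convex ``core'' $K_i:=\bigcap_{j\ne i}A_j^*$, where moreover $P:=\bigcap_j A_j^*=\bigcap_i K_i$ is non-empty. This concentrates everything around $P$.

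The heart of the matter — and the step I expect to be genuinely hard — is to upgrade the soft statement ``almost all of $\mathcal{F}_i$ \emph{meets} the convex set $K_i$'' into the hard statement ``a $\beta$-fraction of some $\mathcal{F}_i$ \emph{has a common point}''. Convexity must enter here in an essential way, since for an arbitrary $(d+1)$-partite hypergraph with few edges no such balanced deletion need exist. The natural device is to feed the convex sets $\{A\cap K_i\}$, or their traces on successive faces of $P$, into the sharp non-colorful fractional Helly theorem (Theorem~\ref{thm:frachel} with $\beta(\alpha,d)=1-(1-\alpha)^{1/(d+1)}$), iterating so as to cut the dimension of the core down by one at each stage until it becomes a single point; an alternative is to run the classical proof of the colorful Helly theorem in a quantitatively stable form. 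In either approach the delicate bookkeeping is to keep the loss under control through the iteration so that the resulting $\beta=\beta(\alpha,d)$ still tends to $1$ as $\alpha\to 1$ — I would expect to be able to keep $1-\beta$ of order $(1-\alpha)^{1/(d+1)}$ up to a multiplicative constant depending only on $d$. Once such $\mathcal{G}_i$ are in hand, Theorem~\ref{thm:colhel} completes the proof.
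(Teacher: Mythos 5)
There is a genuine gap, and it sits exactly where you flag it: the ``heart of the matter'' is not an implementation detail but the entire content of the theorem, and nothing in the proposal supplies it. Your plan reduces the theorem to the much stronger assertion that one can delete at most $(1-\beta)n_i$ sets from \emph{each} class, with $1-\beta\to 0$ as $\epsilon\to 0$, so that \emph{every} remaining colorful $(d+1)$-tuple intersects. This is a vertex-cover statement for the $(d+1)$-partite hypergraph of non-intersecting colorful tuples, and it requires in particular that this hypergraph have no matching of size $\Omega(n)$ when it has only $\epsilon\prod_i n_i$ edges. Nothing you prove (or that is proved in the paper) rules out a ``near-diagonal'' configuration of convex sets with $\Omega(n)$ pairwise disjoint non-intersecting colorful tuples but $o(n^{d+1})$ non-intersecting tuples in total; for $d=1$ one can exclude this by the staircase structure of interval non-intersection, but for $d\ge 2$ you give no argument, and your own observation that the reduction fails for abstract hypergraphs shows that convexity must be invoked in a precise quantitative form which is never stated, let alone proved. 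The tools you point to cannot fill the hole: applying the sharp fractional Helly theorem to the traces $\{A\cap K_i : A\in\mathcal{F}_i\}$ needs many intersecting $(d+1)$-tuples \emph{within a single color class}, and the hypothesis of the theorem gives no information whatsoever about within-class intersections (``most sets of $\mathcal{F}_i$ meet the convex set $K_i$'' is compatible with $\mathcal{F}_i$ being pairwise disjoint). The Markov/averaging step that produces the core $K_i$ is correct but does not advance past this obstacle.

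For comparison, the paper's proof goes in the opposite, contrapositive direction and is a short counting argument: assuming no class has an intersecting subfamily of size $\gamma n_i$, the hypergraph of \emph{within-class} non-intersecting $(d+1)$-tuples in each $\mathcal{F}_i$ has independence number below $\gamma n_i$, hence a matching of size greater than $\frac{(1-\gamma)n_i}{d+1}$; by the colorful Helly theorem, choosing one matching edge from each class forces at least one non-intersecting \emph{colorful} tuple among those $(d+1)^2$ sets, and disjointness of matching edges makes these tuples distinct, so there are more than $\prod_i\frac{(1-\gamma)n_i}{d+1}$ non-intersecting colorful tuples --- contradicting the hypothesis once $\gamma=1-(d+1)(1-\alpha)^{1/(d+1)}$. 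If you want to pursue your deletion strategy, the statement you must prove is precisely a bound of the form ``$m$ pairwise disjoint non-intersecting colorful tuples force $\gtrsim_d m^{d+1}$ non-intersecting colorful tuples,'' and as it stands the proposal neither proves nor cites such a result.
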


See the survey paper by Amenta, Loera, and Sober\'{o}n~\cite{ALS15} for an overview of recent results and open problems related to Helly's theorem.

\bigskip


\section{Proof of Theorem~\ref{thm:cfhel}}\label{sec:pf}

\smallskip
\subsection{The Matching number of hypergraphs}\label{subsec:tau}
	Let $\mathcal{H}$ be an $r$-uniform hypergraph on a vertex set $X$.
	A subset $S\subseteq X$ is said to be an \df{independent set} in $\mathcal{H}$ if the induced sub-hypergraph $\mathcal{H}[S]$ contains no hyperedge.
	The \df{independence number} $\alpha(\mathcal{H})$ of $\mathcal{H}$ is the cardinality of a maximum independent set in $\mathcal{H}$.
	A \df{matching} of $\mathcal{H}$ is a set of pairwise disjoint edges in $\mathcal{H}$.
	The \df{matching number} $\nu(\mathcal{H})$ of $\mathcal{H}$ is the cardinality of a maximum matching in $\mathcal{H}$.
	For our result, we need the following observation.
\begin{obs}\label{lem:nu}
	Let $\mathcal{H}=(X,E)$ be an $r$-uniform hypergraph with $|X|=n$.
	Suppose $\alpha(\mathcal{H})<cn$ for some $c\in(0,1]$.
	Let $M$ be a maximum matching in $\mathcal{H}$.
	Note that $X\setminus M$ is an independent set it $\mathcal{H}$.
	If not, assume that there is an edge $e$ contained in $X\setminus M$.
	Then $M\cup\{e\}$ is a matching in $\mathcal{H}$, which is a contradiction to the maximality of $M$.
	Thus $|X\setminus M|=n-r\nu(\mathcal{H})\leq \alpha(\mathcal{H})<cn$, so $\nu(\mathcal{H})>\frac{n-cn}{r}$.
\end{obs}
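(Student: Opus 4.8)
The plan is to produce a large matching directly from a maximum one by exploiting its maximality, then convert the independence-number hypothesis into a vertex-counting inequality. First I would fix a maximum matching $M$ in $\mathcal{H}$, and (slightly abusing notation) also write $M$ for the set of vertices covered by the edges of this matching. The pivotal step is to show that the set of uncovered vertices $X \setminus M$ is independent in $\mathcal{H}$: if it were not, it would contain some edge $e$, but then $e$ would be disjoint from every edge of $M$, so $M \cup \{e\}$ would be a strictly larger matching, contradicting the maximality of $M$.

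Once independence is established, the hypothesis $\alpha(\mathcal{H}) < cn$ immediately caps the number of uncovered vertices, giving $|X \setminus M| \le \alpha(\mathcal{H}) < cn$. Separately, I would count these vertices from the matching side: since $\mathcal{H}$ is $r$-uniform and the edges of a matching are pairwise disjoint, the $\nu(\mathcal{H})$ edges of $M$ cover exactly $r\nu(\mathcal{H})$ vertices, whence $|X \setminus M| = n - r\nu(\mathcal{H})$.

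Combining the two descriptions of $|X \setminus M|$ gives $n - r\nu(\mathcal{H}) < cn$, and rearranging yields the claimed bound $\nu(\mathcal{H}) > \frac{(1-c)n}{r}$. I do not anticipate any real obstacle here beyond identifying the correct object to complement; the whole argument reduces to a short counting step once one recognizes that the vertices missed by a maximum matching necessarily form an independent set. The only point requiring mild care is the notational overloading of $M$ as both a set of edges and the set of vertices it covers, which I would flag explicitly to keep the counting transparent.
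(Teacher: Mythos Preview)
Your proposal is correct and follows essentially the same argument as the paper: take a maximum matching, observe that the uncovered vertices form an independent set by maximality, and then combine the bound $|X\setminus M|\le \alpha(\mathcal{H})<cn$ with the count $|X\setminus M|=n-r\nu(\mathcal{H})$. The only difference is cosmetic---you explicitly flag the overloading of $M$ as both an edge set and its vertex cover, which the paper leaves implicit.
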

\smallskip
\subsection{Proof of Theorem~\ref{thm:cfhel}}
	Theorem 1.6 is implied by the following more explicit result.
\begin{theorem}\label{claim:cfhel}
	For every $\alpha\in(0,1]$, the following holds: 
	Let $\mathcal{F}_1,\mathcal{F}_2,\dots,\mathcal{F}_{d+1}$ be finite families (color classes) of convex sets in $\mathbb{R}^d$.
	If at least $\alpha|\mathcal{F}_1|\cdots|\mathcal{F}_{d+1}|$ of the colorful $(d+1)$-tuples are intersecting, then for some $1\leq i\leq d+1$,
	$\mathcal{F}_i$ contains an intersecting subfamily of size at least
	\[\max\left\{\frac{\alpha}{d+1}, 1-(d+1)(1-\alpha)^{\frac{1}{d+1}}\right\}|\mathcal{F}_i|.\]
\end{theorem}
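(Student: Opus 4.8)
The plan is to treat the two lower bounds inside the maximum independently, since it suffices to attain either one: the bound $\tfrac{\alpha}{d+1}|\mathcal{F}_i|$ is exactly the conclusion of Theorem~\ref{thm:bfmop}, so the real work is to show that for some $i$ the family $\mathcal{F}_i$ has an intersecting subfamily of size at least $\bigl(1-(d+1)(1-\alpha)^{1/(d+1)}\bigr)|\mathcal{F}_i|$; the statement then follows by using whichever of the two bounds is larger for the given $\alpha$ (for $\alpha$ near $1$ it is the second one).

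Write $n_i=|\mathcal{F}_i|$ and let $\tau_i$ denote the maximum size of an intersecting subfamily of $\mathcal{F}_i$; I may assume $\tau_i<n_i$ for every $i$, since otherwise that $\mathcal{F}_i$ is already intersecting and we are done. For each $i$ I would form the hypergraph $\mathcal{N}_i$ on the vertex set $\mathcal{F}_i$ whose edges are the \emph{minimal} non-intersecting subfamilies of $\mathcal{F}_i$. By Helly's theorem (Theorem~\ref{thm:hel}) every such edge has at most $d+1$ members, and a subset of $\mathcal{F}_i$ is independent in $\mathcal{N}_i$ precisely when it contains no non-intersecting subfamily, i.e.\ precisely when it is itself intersecting; hence $\alpha(\mathcal{N}_i)=\tau_i$. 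The argument of Observation~\ref{lem:nu} (whose proof only uses that every edge has at most $d+1$ vertices) then yields a matching of $\mathcal{N}_i$ of size $\nu_i\ge\frac{n_i-\tau_i}{d+1}$, that is, pairwise disjoint non-intersecting subfamilies $E_i^{(1)},\dots,E_i^{(\nu_i)}$ of $\mathcal{F}_i$.

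The crucial step is to convert these monochromatic blocks into non-intersecting \emph{colourful} tuples. Fix an index vector $(j_1,\dots,j_{d+1})$ with $1\le j_i\le\nu_i$ and consider the $d+1$ families $E_1^{(j_1)},\dots,E_{d+1}^{(j_{d+1})}$; none of them is intersecting, so by the colourful Helly theorem (Theorem~\ref{thm:colhel}) \emph{not} every colourful $(d+1)$-tuple drawn from these families can be intersecting. Choose one colourful tuple $\Phi(j_1,\dots,j_{d+1})=(A_1,\dots,A_{d+1})$ with $A_i\in E_i^{(j_i)}$ and $\bigcap_{i}A_i=\emptyset$. Because the blocks $E_i^{(1)},\dots,E_i^{(\nu_i)}$ of a fixed colour $i$ are pairwise disjoint, the tuple $\Phi(j_1,\dots,j_{d+1})$ determines each $j_i$ — it is the unique block of colour $i$ meeting the tuple — so $\Phi$ is injective. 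Hence the number of non-intersecting colourful tuples is at least $\prod_{i=1}^{d+1}\nu_i\ge\prod_{i=1}^{d+1}\frac{n_i-\tau_i}{d+1}$.

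Finally, at most $(1-\alpha)\prod_i n_i$ of the colourful tuples are non-intersecting, by hypothesis. Comparing the two counts gives $\prod_{i=1}^{d+1}\frac{n_i-\tau_i}{(d+1)n_i}\le 1-\alpha$, and since the left-hand side is a product of $d+1$ nonnegative numbers, at least one of them is $\le(1-\alpha)^{1/(d+1)}$; for that index $i$ we get $\tau_i\ge\bigl(1-(d+1)(1-\alpha)^{1/(d+1)}\bigr)n_i$, as desired. The step I expect to be the main obstacle is the third paragraph: recognising that the colourful Helly theorem applied to the small, pairwise disjoint blocks $E_i^{(j_i)}$, combined with a disjointness/injectivity argument, manufactures $\prod_i\nu_i$ distinct non-intersecting colourful tuples. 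The remaining ingredients — Helly's theorem, the matching estimate behind Observation~\ref{lem:nu}, and the arithmetic–geometric mean inequality implicit in the last line — are routine.
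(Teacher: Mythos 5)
Your proof is correct and follows essentially the same route as the paper: build matchings of pairwise disjoint non-intersecting blocks within each colour class (the matching estimate of Observation~\ref{lem:nu} together with Helly), apply the colourful Helly theorem to one block per colour to manufacture $\nu_1\cdots\nu_{d+1}$ distinct non-intersecting colourful tuples (this is exactly the role of Lemma~\ref{obs:colhel}), and compare with the at most $(1-\alpha)n_1\cdots n_{d+1}$ non-intersecting colourful tuples permitted by the hypothesis. The only differences are cosmetic: the paper argues by contradiction with the $(d+1)$-uniform hypergraph of non-intersecting $(d+1)$-tuples and the threshold $\gamma=1-(d+1)(1-\alpha)^{\frac{1}{d+1}}$, while you argue directly via minimal non-intersecting subfamilies and spell out the injectivity of the chosen colourful tuples.
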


The following is a key lemma of the proof of Theorem~\ref{claim:cfhel}.
\begin{lemma} \label{obs:colhel}
	Choose any $(d+1)$ members from each color class, say $\mathcal{F}'_1,\dots,\mathcal{F}'_{d+1}$.
	If each of $\mathcal{F}'_i$ is not intersecting, then at least one of colorful $(d+1)$-tuple is not intersecting.
\end{lemma}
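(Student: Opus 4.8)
The plan is to observe that Lemma~\ref{obs:colhel} is precisely the contrapositive of the colorful Helly theorem (Theorem~\ref{thm:colhel}), applied to the chosen subfamilies $\mathcal{F}'_1,\dots,\mathcal{F}'_{d+1}$. So I would argue by contradiction: assume that \emph{every} colorful $(d+1)$-tuple picked from $\mathcal{F}'_1,\dots,\mathcal{F}'_{d+1}$ is intersecting. Each $\mathcal{F}'_i$ is a finite, non-empty family of convex sets in $\mathbb{R}^d$, since it consists of $d+1$ of the sets of the $i$-th color class (with repetition allowed, which changes nothing, as a repeated set imposes no extra intersection constraint). Hence the hypothesis of Theorem~\ref{thm:colhel} is met, and it produces an index $i$ for which $\mathcal{F}'_i$ is intersecting. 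This contradicts the assumption that none of the $\mathcal{F}'_i$ is intersecting. Therefore at least one colorful $(d+1)$-tuple is not intersecting.

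The only thing to verify is that the reduction is faithful: ``colorful $(d+1)$-tuple'' in the statement means one set taken from each of the $d+1$ subfamilies $\mathcal{F}'_1,\dots,\mathcal{F}'_{d+1}$, which is exactly the notion of colorful tuple in Theorem~\ref{thm:colhel}, and ``not intersecting'' means the intersection of the sets in question is empty, again matching the formulation there. So there is no genuine obstacle here; the content of the lemma is entirely borrowed from colorful Helly. It is worth isolating it as a separate statement only because the main proof will apply it many times, for a large number of different choices of the $(d+1)$-element subfamilies $\mathcal{F}'_1,\dots,\mathcal{F}'_{d+1}$, in order to convert local non-intersection of the color classes into non-intersecting colorful tuples.
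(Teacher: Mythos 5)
Your proposal is correct and is exactly the paper's argument: the paper proves the lemma by noting it "follows directly from the colorful Helly theorem," i.e.\ it is the contrapositive of Theorem~\ref{thm:colhel} applied to the subfamilies $\mathcal{F}'_1,\dots,\mathcal{F}'_{d+1}$, which is what you spelled out. No difference in substance, only in the level of detail.
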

\begin{proof}
This follows directly from the colorful Helly theorem.
\end{proof}

\begin{proof}[Proof of Theorem~\ref{claim:cfhel}]
	It is sufficient to show that for every $\alpha\in[1-\frac{1}{(d+1)^{(d+1)}},1]$, 
	if at least $\alpha|\mathcal{F}_1|\cdots|\mathcal{F}_{d+1}|$ of the colorful $(d+1)$-tuples are intersecting, then some $\mathcal{F}_i$ contains an intersecting subfamily of size at least $(1-(d+1)(1-\alpha)^{\frac{1}{d+1}})|\mathcal{F}_i|$.

	Let $\mathcal{F}$ be the disjoint union of $\mathcal{F}_1,\mathcal{F}_2,\dots,\mathcal{F}_{d+1}$.
	For each $1\leq i\leq d+1$, denote $n_i=|\mathcal{F}_i|$ and define a $(d+1)$-uniform hypergraph $H_i:=(\mathcal{F}_i,E_i)$ whose vertices are the members in $\mathcal{F}_i$ and hyperedges are non-intersecting $(d+1)$-tuples in $\mathcal{F}_i$.
	Let $\nu_j=\nu(H_j)$ for each $1\leq j\leq d+1$.
	
	Also define a $(d+1)$-uniform hypergraph $H:=(\mathcal{F},E)$ whose vertices are the members in $\mathcal{F}$ and hyperedges are intersecting colorful $(d+1)$-tuples in $\mathcal{F}$.
	
	Given $\alpha\in[1-\frac{1}{(d+1)^{(d+1)}},1]$, let $\gamma=\gamma(\alpha,d)=1-(d+1)(1-\alpha)^{\frac{1}{d+1}}$.
	For contraction, assume that in each family $\mathcal{F}_j$, every subfamily of size at least $\gamma n_j$ has an empty intersection.

	By Lemma~\ref{obs:colhel}, we have $\alpha n_1\cdots n_{d+1}\leq|E|\leq n_1\cdots n_d+1 - \nu_1\cdots \nu_{d+1}$.
	Recall that by Observation~\ref{lem:nu}, $\nu_j>\frac{n_j-\gamma n_j}{d+1}=\left(\frac{1-\gamma}{d+1}\right)n_j$ for each $1\leq j\leq d+1$.
	Then we obtain
	\begin{eqnarray*}
	\alpha n_1\cdots n_{d+1} &\leq& n_1\cdots n_{d+1}-\nu_1\cdots\nu_{d+1}\\
	&<& n_1\cdots n_{d+1}-\left(\frac{1-\gamma}{d+1}\right)^{d+1}n_1\cdots n_{d+1}\\
	&=&\left(1-\left(\frac{1-\gamma}{d+1}\right)^{d+1}\right)n_1\cdots n_{d+1},
	\end{eqnarray*}
	hence $\alpha<1-(\frac{1-\gamma}{d+1})^{d+1}=\alpha$, which is a contradiction.
	
	Thus, there should exist $1\leq i\leq d+1$ such that $\mathcal{F}_i$ contains an intersecting subfamily of size $(1-(d+1)(1-\alpha)^{\frac{1}{d+1}})n_i$. 
\end{proof}

\bigskip
\section{The Upper bound}\label{sec:uppbd}
	First recall that in the fractional Helly theorem, the upper bound is given by \[\beta=\beta(\alpha,d)\leq (1-(1-\alpha)^{\frac{1}{d+1}}).\]
	This can be seen by the following well-known construction, which also shows the exactness of upper bound theorem for convex sets~\cite{Eck85}\cite{Kal84}.
\begin{example}
	Let $\mathcal{F}$ consist of $\lfloor\beta n\rfloor-(d+1)$ copies of $\mathbb{R}^d$ and $n-\lfloor\beta n\rfloor+(d+1)$ hyperplanes in general position.
	Denote by $f_d(\mathcal{F})$ the number of intersecting $(d+1)$-tuples in $\mathcal{F}$.
	Note that
	\begin{eqnarray*}
	\alpha{n\choose d+1}=f_d(\mathcal{F})&=&{n\choose d+1}-{n-(\lfloor\beta n\rfloor-(d+1))\choose d+1}\\
	&<&{n\choose d+1}-{n-\lfloor\beta n\rfloor\choose d+1}\\
	&\leq&{n\choose d+1}-(1-\beta)^{d+1}{n\choose d+1}.
	\end{eqnarray*}
\end{example}

The colorful version of this example gives an upper bound for the colorful fractional Helly theorem.
\begin{theorem}\label{thm:uppbd}
	For every $\alpha\in(0,1]$, there exist finite families (color classes) $\mathcal{F}_1,\dots,\mathcal{F}_{d+1}$ of convex sets in $\mathbb{R}^d$ such that the following holds.
	$\alpha|\mathcal{F}_1|\cdots|\mathcal{F}_{d+1}|$ of the colorful $(d+1)$-tuples are intersecting, but in each color class $\mathcal{F}_i$, the maximum cardinality of an intersecting subfamily is at most $(1-(1-\alpha)^{\frac{1}{d+1}})|\mathcal{F}_i|$.
\end{theorem}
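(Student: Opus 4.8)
The plan is to generalize the single-family construction in the Example to $d+1$ color classes by taking a product-like configuration. First I would fix $\alpha\in(0,1]$ and choose, in each color class $\mathcal{F}_i$, a number $m_i = n_i - \lfloor(1-\beta)n_i\rfloor$ of copies of $\mathbb{R}^d$ together with $k_i = \lfloor(1-\beta)n_i\rfloor$ hyperplanes in general position, where $\beta = 1-(1-\alpha)^{1/(d+1)}$; here the hyperplanes across all the color classes are taken jointly in general position in $\mathbb{R}^d$. A colorful $(d+1)$-tuple then fails to intersect exactly when it consists entirely of hyperplanes, one from each color, and (by general position) in that case any $d+1$ of them already have empty intersection. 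Hence the number of intersecting colorful $(d+1)$-tuples is
\[
n_1\cdots n_{d+1} - k_1\cdots k_{d+1}.
\]

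Next I would verify the two required properties. For the intersecting-fraction bound: since $k_i \le (1-\beta)n_i$ we get $k_1\cdots k_{d+1} \le (1-\beta)^{d+1} n_1\cdots n_{d+1} = (1-\alpha)\,n_1\cdots n_{d+1}$, so the number of intersecting colorful tuples is at least $\alpha\,n_1\cdots n_{d+1}$, as desired. (If one wants \emph{exactly} $\alpha\,n_1\cdots n_{d+1}$ one can, as in the Example, either allow the stated inequality or adjust the $n_i$ so the floors are tight; I would follow the convention already used in the Example and simply state the inequality, or remark that a suitable choice of the $n_i$ makes it an equality.) For the upper bound on intersecting subfamilies within a single color class $\mathcal{F}_i$: within $\mathcal{F}_i$ the only members that can belong to a common point are the $m_i$ copies of $\mathbb{R}^d$ together with at most $d$ of the hyperplanes (general position again), but since $d$ is a constant and we are measuring the fraction, the largest intersecting subfamily has size $m_i + O(1) = (1-(1-\beta))n_i + O(1)$, wait — more carefully, $m_i = n_i - k_i$, and an intersecting subfamily can add at most $d$ hyperplanes, giving size at most $n_i - k_i + d \le (1-(1-\beta))n_i + d$. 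Hmm, that reads $\beta n_i + d$, whereas the claim wants $(1-(1-\alpha)^{1/(d+1)})n_i = \beta n_i$; the additive $d$ is absorbed exactly as in the single-color Example, where the construction uses $\lfloor\beta n\rfloor - (d+1)$ copies of $\mathbb{R}^d$ precisely to kill this slack. So I would instead set $m_i = \lfloor \beta n_i\rfloor - (d+1)$ copies of $\mathbb{R}^d$ and $k_i = n_i - \lfloor\beta n_i\rfloor + (d+1)$ hyperplanes, mirroring the Example line for line; then a largest intersecting subfamily of $\mathcal{F}_i$ has size at most $m_i + d < \beta n_i$, and the intersecting-tuple count is $n_1\cdots n_{d+1} - \prod_i(n_i - \lfloor\beta n_i\rfloor + (d+1))$, which the same chain of inequalities as in the Example bounds below by $\alpha\,n_1\cdots n_{d+1}$.

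The only genuine point needing care — and the main (mild) obstacle — is the joint general position of the hyperplanes across all $d+1$ color classes simultaneously: I must ensure that (i) no $d+1$ of the chosen hyperplanes, regardless of which colors they come from, have a common point, so that every all-hyperplane colorful tuple is non-intersecting, and (ii) within each single color class, no $d+1$ hyperplanes meet, so that adding more than $d$ hyperplanes of one color to a subfamily kills the intersection. Both follow from a single genericity condition: choose all $\sum_i k_i$ hyperplanes to be in general position in $\mathbb{R}^d$ (any $d$ meet in a point, no $d+1$ share a point), which is clearly achievable. With that in place, everything reduces to the elementary counting above, essentially identical to the Example. I would therefore present the proof as: describe the construction, observe via colorful general position which colorful tuples fail, then reproduce the displayed inequality chain of the Example with $\binom{n}{d+1}$ replaced by the product $n_1\cdots n_{d+1}$ and $(1-\beta)^{d+1}$ in place of $(1-\beta)$, concluding $\alpha \le 1 - (1-\beta)^{d+1}$, i.e. $\beta \le 1-(1-\alpha)^{1/(d+1)}$.
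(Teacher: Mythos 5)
Your construction is essentially the paper's own (each color class consists of copies of $\mathbb{R}^d$ together with hyperplanes, all hyperplanes jointly in general position, so that the non-intersecting colorful tuples are exactly the all-hyperplane ones), but your final counting claim has a direction error. With your corrected choice $k_i=n_i-\lfloor\beta n_i\rfloor+(d+1)$ and $\beta=1-(1-\alpha)^{1/(d+1)}$ one has $k_i>(1-\beta)n_i$, hence
\[
\prod_i k_i \;>\; (1-\beta)^{d+1}\prod_i n_i \;=\; (1-\alpha)\prod_i n_i,
\]
so the number of intersecting colorful tuples, $n_1\cdots n_{d+1}-\prod_i k_i$, is strictly \emph{less} than $\alpha\,n_1\cdots n_{d+1}$; the Example's chain of inequalities bounds this count from above, not from below, so it cannot give you the claimed lower bound. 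Your first choice of $k_i$ has the opposite defect, which you noticed: the tuple count is at least $\alpha\,n_1\cdots n_{d+1}$, but a largest intersecting subfamily has size $n_i-k_i+d$, which can exceed $\beta n_i$ by up to $d$. This is not an accident of the floors: if $n_i-k_i+d\le\beta n_i$ for every $i$, then $k_i\ge(1-\beta)n_i+d$, whence $\prod_i k_i>(1-\beta)^{d+1}\prod_i n_i$, so for $d\ge 1$ no construction of this type satisfies both requirements exactly at finite $n$.

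The paper's proof resolves this at precisely the point where your writeup asserts too much: it takes $\alpha$ to be the fraction of intersecting colorful tuples actually produced by the construction, derives $\alpha<1-(1-\beta)^{d+1}$, and then lets $n\to\infty$, where the additive constants and the floor slack become negligible and one may have $\alpha=1-(1-\beta)^{d+1}$, i.e.\ $\beta=1-(1-\alpha)^{1/(d+1)}$. So you should state the conclusion asymptotically (the upper bound on $\beta(\alpha,d)$ is attained in the limit of large $n_i$, or with an $o(1)$ loss in one of the two quantities), rather than claiming that a single finite construction achieves the tuple-count lower bound and the subfamily upper bound simultaneously. Everything else in your proposal, the joint general position of all hyperplanes and the identification of the non-intersecting colorful tuples, matches the paper's argument.
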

\begin{proof}
	It follows from the following construction.
	Let $\mathcal{F}_i$ consist of $\lfloor\beta n\rfloor-d$ copies of $\mathbb{R}^d$ and $n-\lfloor\beta n\rfloor+d$ hyperplanes in general position.
	Moreover, let all hyperplanes in $\mathcal{F}_1\cup\cdots\cup\mathcal{F}_{d+1}$ be in general position.
	Note that each of $\mathcal{F}_i$ has an intersecting subfamily of size at most $\beta n$.
	The number of colorful $(d+1)$-tuples is given by
	\[\alpha n^{d+1}=n^{d+1}-(n-\lfloor\beta n\rfloor+d)^{d+1}<n^{d+1}-(1-\beta)^{d+1}n^{d+1}.\]
	As $n$ tends to infinity, one may have that $\alpha = 1-(1-\beta)^{d+1}$, i.e. $\beta=1-(1-\alpha)^{\frac{1}{d+1}}$.
\end{proof}

\bigskip
\section{Remarks}\label{sec:remark}

	In this note, we found upper and lower bounds on the function $\beta(\alpha,d)$ in the colorful fractional Helly theorem, however, there remains a large gap between them.
	It would be interesting to determine the exact value of $\beta(\alpha,d)$.
\begin{problem}~\label{prob:1}
	What is the exact value of $\beta=\beta(\alpha,d)$ in Theorem~\ref{thm:cfhel}?
\end{problem}
	It is easy to see that $\beta(\alpha,1)=1-\sqrt{1-\alpha}$ is the optimal bound for $d=1$.
	We conjecture that $\beta(\alpha,d)=1-(1-\alpha)^{\frac{1}{d+1}}$ is the optimal bound for $d>1$.
\bigskip
\section*{Acknowledgment}
I would like to thank my advisor Prof.Andreas Holmsen for introducing the problem and providing insightful comments. I also would like to thank Dr.Ilkyoo Choi for his advice to improve the readability of the paper.

\bibliographystyle{abbrv}
\bibliography{bib-matroid}

\begin{thebibliography}{10}

\bibitem{AK79}
H.~Abbott and M.~Katchalski.
\newblock A {T}ur\'an type problem for interval graphs.
\newblock {\em Discrete Math.}, 25(1):85--88, 1979.

\bibitem{AKl92}
N.~Alon and D.~J. Kleitman.
\newblock Piercing convex sets and the {H}adwiger-{D}ebrunner
  {$(p,q)$}-problem.
\newblock {\em Adv. Math.}, 96(1):103--112, 1992.

\bibitem{ALS15}
N.~Amenta, J.~A.~D. Loera, and P.~Sober\'{o}n.
\newblock Helly's theorem: New variations and applications, 2015.

\bibitem{Bar82}
I.~B{\'a}r{\'a}ny.
\newblock A generalization of {C}arath\'eodory's theorem.
\newblock {\em Discrete Math.}, 40(2-3):141--152, 1982.

\bibitem{BFMOP14}
I.~B{\'a}r{\'a}ny, F.~Fodor, L.~Montejano, D.~Oliveros, and A.~P{\'o}r.
\newblock Colourful and fractional {$(p,q)$}-theorems.
\newblock {\em Discrete Comput. Geom.}, 51(3):628--642, 2014.

\bibitem{Eck85}
J.~Eckhoff.
\newblock An upper-bound theorem for families of convex sets.
\newblock {\em Geom. Dedicata}, 19(2):217--227, 1985.

\bibitem{Eck93}
J.~Eckhoff.
\newblock Helly, {R}adon, and {C}arath\'eodory type theorems.
\newblock In {\em Handbook of convex geometry, {V}ol.\ {A}, {B}}, pages
  389--448. North-Holland, Amsterdam, 1993.

\bibitem{Gav74}
F.~Gavril.
\newblock The intersection graphs of subtrees in trees are exactly the chordal
  graphs.
\newblock {\em J. Combinatorial Theory Ser. B}, 16:47--56, 1974.

\bibitem{HD57}
H.~Hadwiger and H.~Debrunner.
\newblock \"{U}ber eine {V}ariante zum {H}ellyschen {S}atz.
\newblock {\em Arch. Math. (Basel)}, 8:309--313, 1957.

\bibitem{Hel23}
E.~Helly.
\newblock \"{U}ber mengen konvexer k\"{o}rper mit gemeinschaftlichen punkte.
\newblock {\em Jahresber. Deutsch. Math.-Verein.}, 32:175--176, 1923.

\bibitem{Kal84}
G.~Kalai.
\newblock Intersection patterns of convex sets.
\newblock {\em Israel J. Math.}, 48(2-3):161--174, 1984.

\bibitem{KL79}
M.~Katchalski and A.~Liu.
\newblock A problem of geometry in {${\bf R}^{n}$}.
\newblock {\em Proc. Amer. Math. Soc.}, 75(2):284--288, 1979.

\bibitem{Kng22}
D.~K\"{o}nig.
\newblock \"{U}ber konvexe k\"{o}rper.
\newblock {\em Math. Z.}, 14:208--210, 1922.

\bibitem{Mat02}
J.~Matou{\v{s}}ek.
\newblock {\em Lectures on discrete geometry}, volume 212 of {\em Graduate
  Texts in Mathematics}.
\newblock Springer-Verlag, New York, 2002.

\bibitem{Rad21}
J.~Radon.
\newblock Mengen konvexer {K}\"orper, die einen gemeinsamen {P}unkt enthalten.
\newblock {\em Math. Ann.}, 83(1-2):113--115, 1921.

\end{thebibliography}

\end{document}